\newtheorem{thm}{Theorem}[section]
\newtheorem*{thm*}{Theorem}
\newtheorem{cor}[thm]{Corollary}
\newtheorem*{cor*}{Corollary}
\newtheorem{lem}[thm]{Lemma}
\newtheorem{prop}[thm]{Proposition}
\newtheorem*{con*}{Conjecture}
\newtheorem*{prob*}{Problem}
\theoremstyle{definition}
\newtheorem{defn}[thm]{Definition}
\theoremstyle{remark}
\newtheorem{rem}[thm]{Remark}
\renewcommand{\L}{\Lambda}
\newcommand{\bbZ}{\mathbb{Z}}
\newcommand{\bbC}{\mathbb{C}}
\newcommand{\bbR}{\mathbb{R}}
\newcommand{\bbN}{\mathbb{N}}
\begin{document}
\title{Topological Rigidity of Quasitoric Manifolds}

\author{Vassilis Metaftsis}
\address{Department of Mathematics
University of the Aegean,
Karlovassi, Samos, 83200 Greece}
\email{vmet@aegean.gr}

\author[Stratos Prassidis]{Stratos Prassidis$^*$}

\email{prasside@aegean.gr}
\thanks{$^*$The research of the second author has been co-financed by the European Union (European Social Fund - ESF) and Greek national funds through the Operational Program "Education and Lifelong Learning" of the National Strategic Reference Framework (NSRF) - Research Funding Program: THALIS}

\keywords{Quasitoric manifolds, manifolds with corners, locally standard torus actions, equivariant rigidity}
\maketitle

\begin{abstract}
Quasitoric manifolds are manifolds that admit an action of the torus that is locally as the standard action of $T^n$ on ${\bbC}^n$. It is known that the 
quotients of such actions are nice manifolds with corners. We prove that a class of locally standard
manifolds, that contains the quasitoric manifolds, 
is equivariantly rigid i.e., that any other manifold that is $T^n$-homotopy equivalent to a quasitoric manifold, is $T^n$-homeomorphic to it.
\end{abstract}

\section{Introduction}

Toric varieties are studied extensively in algebraic geometry and combinatorics (\cite{fu}, \cite{oda}). The main tool in their study is the simplicial complex that is 
determined by the fan of the toric variety. This simplicial complex is actually the quotient of the toric variety by the torus action.
The combinatorial properties of the simplicial complex reflect the algebraic and geometric properties of the variety and
vice versa. A topological analogue of toric varieties was introduced by Davis--Januszkiewicz (\cite{da-ja}) called toric manifolds in the paper. For avoiding confusion with the terminology, later the term  {\sl quasitoric manifolds} became prominent for these spaces. The term ``toric manifold'' is reserved for the 
non-singular toric varieties. Quasitoric manifolds are manifolds 
that admit an action of the torus $T^n$ which is {\it locally standard} such that the quotient space is a simple polytope. Locally standard actions are those where,
locally, $T^n$ acts by the standard coordinate-wise multiplication on ${\bbC}^n$. As in the toric variety case, the combinatorial properties of the polytope 
provide information about the topological structure of the manifold. Furthermore, in certain cases, the manifolds can be reconstructed from the polytope and an appropriate assignment of subgroups of $T^n$ to the faces of the polytope. 

 In this paper, we consider a further generalization studying locally standard $T^n$-actions on manifolds.
 In this case, the quotient space is a nice manifold with corners. As before, we show that the combinatorial properties of the manifold with corners are
 reflected to the topology of the manifold. Also, the  manifold can be reconstructed by an appropriate assignment of subgroups of $T^n$ to its 
 faces. The main theorem of the paper is the following.
 
 \begin{thm*}[Main Theorem]
 Let $M^{2n}$ be a closed $T^n$-quasitoric manifold.
 Let $N^{2n}$ a closed locally linear $T^n$-manifold and $f: N^{2n} \to M^{2n}$  an equivariant homotopy equivalence. Then $f$ is
 equivariantly homotopic to an equivariant homeomorphism.
 \end{thm*}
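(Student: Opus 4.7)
The plan is to reduce the equivariant rigidity problem to a topological rigidity statement on the orbit spaces (which are manifolds with corners) and then reconstruct the equivariant structure via the Davis--Januszkiewicz characteristic function data. Passing to quotients, $f$ induces a map $\bar f\colon N/T^n \to M/T^n = P^n$, where $P^n$ is a simple polytope. The locally linear $T^n$-action on the $2n$-manifold $N$, together with the existence of an equivariant homotopy equivalence to the locally standard $M$, forces the isotropy-type decomposition of $N$ to mirror that of $M$: for each subtorus $T^k\le T^n$, the components of $N^{T^k}$ are in bijection under $f$ with those of $M^{T^k}$, and the action on $N$ is itself locally standard. Consequently $N/T^n$ carries a nice manifold-with-corners structure whose face poset is identified via $\bar f$ with the face poset of $P^n$.

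I would then induct upward on the dimension of the faces to rigidify $\bar f$. Over each vertex of $P^n$ the map $\bar f$ has a unique preimage point (a $T^n$-fixed point of $N$); inductively, assuming $\bar f$ is a homeomorphism on the union of faces of dimension less than $k$, I consider a $k$-face $F\subset P^n$ and its preimage $\bar F\subset N/T^n$. Both $F$ and $\bar F$ are contractible manifolds with corners whose boundaries are already identified by the inductive hypothesis, so the required homeomorphism rel boundary follows from topological rigidity of the disk (Freedman in dimension four; controlled $h$-cobordism in higher dimensions; elementary in low dimensions). Each face-preserving deformation is promoted to an equivariant deformation of $f$ via an equivariant tubular neighbourhood of the corresponding orbit stratum.

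Once $\bar f$ is a homeomorphism of manifolds with corners, the assignment of isotropy groups yields characteristic functions $\lambda_N$ on $N/T^n$ and $\lambda_M$ on $P^n$ that agree under $\bar f$, since equivariance of $f$ preserves stabilisers. By the Davis--Januszkiewicz type reconstruction established earlier in the paper for nice manifolds with corners, $N$ is canonically $T^n$-homeomorphic to $(N/T^n)\times T^n/{\sim_{\lambda_N}}$, and similarly for $M$; the induced reconstruction map covers $\bar f$, is $T^n$-equivariant, and an equivariant fibrewise homotopy along orbits deforms $f$ to it. The technical heart of the argument is the rigidification of $\bar f$: although each polytope cell is contractible, one must perform the rigidification simultaneously over all faces while preserving the torus-bundle structure over the face interiors, which amounts to trivialising relative controlled $s$-cobordism obstructions along strata of varying dimension; the codimension-two strata, where controlled surgery invariants are most delicate, are where I would expect the bulk of the work to concentrate.
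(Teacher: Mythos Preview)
Your outline is essentially the paper's proof: show the $T$-action on $N$ is locally standard, pass to the quotient manifolds with corners, rigidify $\bar f$ by an induction on faces using the Poincar\'e conjecture in each dimension, and then lift back via the canonical model. The one place where you diverge is your final paragraph, and there you are anticipating difficulties that do not exist.

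You write that each face-level deformation must be promoted to an equivariant one via equivariant tubular neighbourhoods ``while preserving the torus-bundle structure over the face interiors,'' and you forecast controlled $s$-cobordism obstructions concentrated in codimension two. None of this arises. The paper shows (Proposition~\ref{prop-reverse}) that the original equivariant map $f$ is already $T$-homotopic to the canonically induced map $\phi_*$, and (Corollary~\ref{cor-homotopy}) that any \emph{skeletal} homotopy of $\phi$ on the quotient lifts automatically, via the explicit formula $(t,x)\mapsto(\sigma(t),\phi_s(x))$, to an equivariant homotopy of $\phi_*$. Hence the entire rigidification happens purely on the orbit space: one needs only a skeletal homotopy from $\phi$ to a skeletal homeomorphism $h$, and the ordinary (uncontrolled) structure-set computation $\mathcal{S}(F,\partial F)=*$ for each contractible face supplies that rel boundary, face by face. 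There is no controlled surgery, no stratified obstruction theory, and nothing special about codimension two; the functoriality of the canonical model $(X,\Lambda)\mapsto M_X(\Lambda)$ absorbs all of the equivariant content.
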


Actually, the theorem is proved for a slightly more general class of locally standard torus manifolds (Theorem \ref{thm-main}).

The idea of the proof is the same an the one used in the Coxeter group case (\cite{mo-pr}, \cite{pr-sp}, \cite{ro}). 
After all, the reconstruction of the quasitoric and $T^n$-locally standard manifolds,
from their quotient spaces, is similar to the construction of the Coxeter complex of a Coxeter group, a similarity that was made precise in \cite{da-ja}. First it is 
proved that $N^{2n}$ is a $T^n$-locally standard manifold. Let $X = M^{2n}/T^n$ and $Y = N^{2n}/T^n$. Then $X$ and $Y$ are nice manifolds with corners and $f$ induces a map ${\phi}: Y \to X$ that
is a face-preserving homotopy equivalence. As in the references for the Coxeter group case, we show inductively that there is a face-preserving homotopy from
$\phi$ to a face-preserving homeomorphism $h$. The homeomorphism $h$ lifts to a $T^n$-homeomorphism between $N^{2n}$ and $M^{2n}$ that is homotopic to $f$. 

The main theorem, loosely, can be considered as a version of an equivariant or stratified Borel Conjecture. Let ${\pi}: M \to X$ be the quotient map.
Over the interior $\overset{\circ}{\sigma}$ of faces of $X$, the map $\pi$ is a fiber bundle with fiber $T_{\sigma}$, where $T_{\sigma}$ is the isotropy group of 
$\sigma$. So, $M^{2n}$ admits a stratification by open aspherical manifolds. 

There are rigidity results known for non-singular toric varieties (\cite{ma}, \cite{ma-suh}), for quasitoric manifolds
(\cite{wi1}, \cite{wi2}) and for $T^n$-locally standard manifolds (\cite{yo1}). In all the above the classification
is given using cohomological and combinatorial data associated to the spaces.

In \cite{yo1}, a generalization of the locally standard actions is given, called local torus actions. Our methods do not directly generalize to this case.
In \cite{yo}, the generalization of the quotient map ${\pi}: M^{2n} \to X$ is given.  It is called local standard torus fibration. Again, our methods can not be
applied directly to the stratified rigidity problem for  such $M^{2n}$.

%%%%%%%%%%%%%%%%%%%%%%%%%%%%
\section{Preliminaries and Notation}\label{sec-not}
%%%%%%%%%%%%%%%%%%%%%%%%%%%%%%%

We consider $S^1$ as the standard subgroup of ${\bbC}^*$, the multiplicative group of non-zero complex numbers. Furthermore $T^n < ({\bbC}^*)^n$. We refer to
the standard representation of $T^n$ by diagonal matrices in $U(n)$ as the standard action of $T^n$ on ${\bbC}^n$. 
The orbit space of the action is the positive cone:
$${\bbR}_+^n = \{(x_1, x_2, \dots, x_n):\; x_i \ge 0\}.$$

\begin{defn}
Let $M^{2n}$ be a $2n$-dimensional manifold with an action of $T^n$. The action
is called {\it locally standard} if for every $x\in M^{2n}$ there is a $T^n$ invariant
neighbourhood $U$ of $x$ and a homeomorphism $f: U\rightarrow W$ where
$W$ is an open set in ${\bbC}^n$ invariant under the standard action of $T^n$, 
and an automorphism $\phi: T^n\rightarrow T^n$ such that $f(ty)=\phi(t)f(y)$ for
all $y\in U$.  We call $M^{2n}$ a $T^n$-locally standard manifold. We will consider
only closed $T^n$-locally standard manifolds.

\end{defn}

\begin{defn}
An action $G\times M\rightarrow M$ is effective if there is no non-trivial element of $G$
that stabilizes $M$ pointwise. In other words the intersection of all isotropy subgroups
is trivial.
\end{defn}

\begin{rem}
\begin{enumerate}
\item If the action of $T^n$ on an even dimensional manifold $M^{2n}$ is effective and it does not have any finite isotropy groups, then the action is locally standard by the slice theorem (\cite{yo}, Example 2.1).
\item If  $M^{2n}$ is smooth and $H^{\text{odd}}(M) = 0$, then the action is locally standard (\cite{ma-pa}).
\end{enumerate}
\end{rem}

The next definition formalizes the local properties of the quotient space of a locally standard $T^n$-
action. The following definition is in \cite{davis}.

\begin{defn}
A space $X$ is an $n$-manifold with corners if it is a Hausdorff, second countable space equipped with an atlas of open sets each one 
homeomorphic to an open subset of ${\bbR}_+^n$ such that the overlap maps are local homeomorphisms that preserve the natural stratification of ${\bbR}^n_+$.
\end{defn}

The quotient of a locally standard action is a {\it manifold with corners} (\cite{ma-pa}). 

\begin{rem}
For any $n$-manifold with corners $X$ we have the following.
\begin{enumerate}
\item For each $x\in X$ and a chart $\sigma$, define $c(x)$ to be the number of coordinates of ${\sigma}(x)$ that are $0$. The number $c(x)$ is independent of the 
choice of the chart and so $c$ defines a map $c:X\rightarrow {\bbN}$. For $0 \le k \le n$, a connected component of $c^{-1}(k)$ is a stratum of codimension $k$. The closure of a stratum is called a closed stratum. 
\item Let $x\in X$. Define
$$Y(x) = \{ C:\; C \; \text{closed codimension-one stratum that contains} \; x\}.$$
The manifold with corners $X$ is called {\it nice} if $|Y(x)| = 2$, whenever $c(x) = 2$.
\item The slice theorem implies that the quotient space of a locally standard $T^n$-action is a nice manifold with corners (\cite{ma-pa}).
\item A {\it facet} in an $n$-manifold with corners is the closure of a connected component of the codimension $1$ stratum.
 A non-empty intersection of $k$ facets is called a codimension-$k$ {\it preface} 
($k = 1, \ldots, n$). In general, prefaces of codimension $> 1$ may be disconnected. A connected component of a preface is called a {\it face}. If $G$ is a subface of $F$, we write $G < F$ or
$F > G$.
The manifold $X$ itself is
considered to be a codimension-$0$ face. The $k$-{\it skeleton} of a manifold with corners $X$ is the set of all faces of codimension greater than or equal to $k$ and it is denoted
by $X^{(k)}$.

\end{enumerate}
\end{rem}

The following remark summarizes the connection between $T^n$-locally standard manifolds and
manifolds with corners.

\begin{rem}
\begin{enumerate}
\item Let $M^{2n}$ be  a closed $T^n$-locally standard manifold. Then the quotient $X^n = M^{2n}/T^n$ is 
a compact nice manifold with corners (\cite{ma-pa}, \cite{yo1}).

\item As we mentioned already, quasitoric manifolds are $T^n$-locally standard manifolds with the property that the quotient space is not just a manifold with corners but it is a simple polytope.

\item Let $M^{2n}$ be a $T^n$-locally standard manifold and ${\pi}: M^{2n} \to X^n$ the orbit map.

Then points in $M^{2n}$, with the same isotropy groups, are mapped to the relative interior of a preface of $X^n$.
Thus the action of $T^n$ is free over the open stratum of $X^n$ and the vertices of $X^n$ i.e. the $0$-dimensional faces, correspond to the fixed points of the action.

\end{enumerate}
\end{rem}

\begin{defn}
Let $M^{2n}$ be a $T^n$-locally standard manifold, $X = M/T^n$ the quotient 
manifold with corners and ${\pi}: M^{2n} \to X^n$ the quotient map. Then $M^{2n}$ is called
a $T^n$-manifold over $X^n$.
\end{defn}

Let ${\pi}: M^{2n} \to X^n$ be the projection defined above. A codimension-$1$ connected component of a fixed point set of a circle in $T^n$ is called a {\it characteristic submanifold} of $M$. The images of the characteristic submanifolds are the facets of $X$.

%%%%%%%%%%%%%%%%%%%%%%%%%%%%%%%%%%%%%%%%%%%%%%%%%%%%%%
\section{The canonical model}
%%%%%%%%%%%%%%%%%%%%%%%%%%%%%%%%%%%%%%%%%%%%%%%%%%%%%%

We will show how to reconstruct the $T^n$-locally standard manifolds from a manifold with corners $X$ and some linear data on the set of facets of $X$. We use the construction in \cite{ma-pa} that generalizes
the construction of quasitoric manifolds in \cite{bp} and \cite{da-ja}. We write $T = T^n$.

First, we will see some of the properties of characteristic submanifolds of a $T$-locally standard manifold. We assume that $M$ is a closed $T$-locally linear manifold and thus, its quotient, $X$ is a compact manifold with corners.
Let $M_i^{2(n-1)} = {\pi}^{-1}(X_i)$ be the characteristic submanifolds (\cite{bp}, page 34), where
$X_i$ are the facets of $X$ ($i = 1, \ldots, k$). Let
$${\Lambda}: \{ X_1, \dots , X_k\} \to \{T':\; T' < T, 1\text{-dimensional}\}.$$
be defined as ${\Lambda}(X_i)$ to be the isotropy group of $M_i$.
More precisely, ${\Lambda}(X_i)$ has the form
$$T_{X_i} = \{ (e^{2{\pi}i{\lambda}_{1j}{\phi}}, \dots e^{2{\pi}i{\lambda}_{nj}{\phi}})\in T^n;\;
\phi \in \bbR\},$$
for some primitive vector $({\lambda}_1, \dots , {\lambda}_n)$ of ${\bbZ}^n$.
The main property of these data is  that (\cite{bp}, p. 34): 

\vspace{12pt}
\noindent
{\bf Property (*)}:
if $X_{i_1}{\cap}\dots {\cap} X_{i_m} \not= \emptyset $ then 
${\Lambda}(X_{i_1}) {\times} \dots {\times}{\Lambda}(X_{i_m}) \to T$
is injective.

\vspace{12pt}

Let $F$ be an $m$-face of $X$. Then $F = X_{i_1}{\cap}\dots {\cap} X_{i_{n - k}}$, for some facets of $X$.
We write $T_F = T_{X_{i_1}}{\times} \dots T_{X_{i_{n - k}}}$, which is an $(n - k)$-torus.
That construction defines a map between lattices, extending the map $\Lambda$ above (\cite{bp}, page 34).
$${\Lambda}: \{ F: \; F < X\} \to \{T': \; T' < T\}, \quad F \mapsto T_F.$$

Now, we give the inverse of the construction (\cite{bp}, Construction 2.2.2). We start with a compact manifold with corners $X$ and a map $\Lambda$ that satisfies Property (*) above. Such a pair $(X, {\Lambda})$ is called 
a {\it characteristic pair} and $\Lambda$ the {\it characteristic map}.
For $x\in X$, we denote by $F(x)$ the smallest face of $X$ that contains $x$ in its relative interior.

Define:
$$M_X({\Lambda}) = T{\times}X/{\sim}, \;\; (t, x) \sim (t', x') \Longleftrightarrow x = x', \; \text{and}\; t^{-1}t'\in T_{F(x)}.$$
The space  $M_X({\Lambda})$ is a closed manifold and the torus $T$ acts on it  by acting on the first coordinate. 
Actually, the space $M_X({\Lambda})$ is a smooth locally standard manifold (\cite{bp}, Construction 2.2,
\cite{ma-pa}, Proposition 4.5). That means that $M_X({\Lambda})$ is locally standard and the global action of the torus is smooth.

The following result is implicit in \cite{bp}, page 34.

\begin{lem}\label{lem-fix}
Let $(X, {\Lambda})$ be a characteristic pair and $M_X({\Lambda})$ the canonical model
corresponding to it. Let ${\pi}: M_X({\Lambda}) \to X$ the quotient map. Then for a face $F$ of $X$ with corresponding 
group $T_F$, the fixed point set of $T_F$ is given by:
$$M_X({\Lambda})^{T_F} = {\pi}^{-1}(F) = \{[t, x]: \; t\in T, \; x\in F\} \subset M_X({\Lambda}).$$
\end{lem}

\begin{proof}
The proof is also in \cite{bp}. 
First we will show that ${\pi}^{-1}(F) \subset M_X({\Lambda})^{T_F}$. Let $[t, x]\in {\pi}^{-1}(F)$.
Then $x \in F$, which implies that $T_{F(x)} > T_F$. For $t' \in T_F$, $t'[t, x] = [t't, x]$. But
$t't.t^{-1} = t'\in T_F < T_{F(x)}$, which implies $[t't,x] = [t, x]$. 
Thus $[t, x] \in M_X({\Lambda})^{T_F}$.

Now we show the inverse inclusion $M_X({\Lambda})^{T_F} \subset {\pi}^{-1}(F)$. Let
$[t, x]$ be fixed by $T_F$. Then, for $t'\in T_F$,
$$t'[t, x] = [t't, x] = [t, x] \Rightarrow t't.t^{-1} = t' \in T_{F(x)} \Rightarrow T_F < T_{F(x)} \Rightarrow
F(x) < F$$
the last relation assets that $F(x)$ is a face of $F$. Thus $x \in F$, which completes the argument.
\end{proof}

The following results compares a $T$-locally standard manifold with its canonical model (\cite{yo1}, Section 5). In \cite{yo1}, Lemma 5.2 and Theorem 5.5, it is shown that the two manifolds $M$ and 
$M_X({\Lambda})$ are $T$-homeomorphism, with a homeomorphism covering the 
identity on $X$, if and only if a class $e(M, X) \in 
\check{\text{H}}^1(X, \mathscr{S}_{(X, {\Lambda})})$, called the
{\it Euler class}, vanishes. Here the cohomology theory is \v{C}ech cohomology with coefficients 
defined as follows. Let ${\pi}_{\Lambda}: M_X({\Lambda}) \to X$ be the quotient map.
The sheaf  $\mathscr{S}_{(X, {\Lambda})}$ assigns to every open set $U$ of $X$ so that 
${\pi}_{\Lambda}|_U: {\pi}_{\Lambda}^{-1}(U) \to U$ is trivial, 
the section $\text{Sec}({\pi}_{\Lambda}|_U)$ of ${\pi}_{\Lambda}|U$.

\begin{lem}[\cite{yo1}]\label{lem-canonical}
Let $M^{2n}$ be a $T$-locally standard manifold, ${\pi}: M \to X$ the orbit map and 
$M_X({\Lambda})$ the canonical model associated to the action. Then the following are equivalent:
\begin{enumerate}
\item There is a $T$-homeomorphism $h: M \to M_X({\Lambda})$
 covering the identity on $X$,
\item The orbit map ${\pi}: M \to X$ admits a section.
\item The Euler class $e(X, M) \in \check{\text{H}}^1(X, \mathscr{S}_{(X, {\Lambda})})$ vanishes.
\end{enumerate}
If any of the conditions above hold we say that the pair $(M, X)$ splits.
\end{lem}

\begin{rem}\label{rem-homeomorphism}
\begin{enumerate}
\item In \cite{da-ja} the existence of the $T$-homeomorphism between $M$ and $M_X({\Lambda})$ was proved for quasitoric manifolds. Thus when $X$ is a simple polytope, $\check{\text{H}}^1(X, \mathscr{S}_{(X, {\Lambda})}) = 0.$ So for every quasitoric manifold $M$, $(M, X)$ splits.
\item In \cite{ma-pa} it was shown that the $T$-homeomorphism exists under
the condition that $M$ is a smooth $T^n$- locally standard manifold and $H^2(X, {\bbZ}) = 0$. So 
Thus, in this case, the pair $(M, X)$ splits.
\item In \cite{yo1} the result was stated for manifolds that admit a local torus action.
\end{enumerate}
\end{rem}

Now we investigate the natural properties of the construction. 

\begin{defn}
Let ${\phi}: Y \to X$ a map between manifolds with corners. 
\begin{enumerate}
\item $\phi$ is called {\it skeletal} if it preserves skeleta i.e.
${\phi}(Y^{(k)}) \subset X^{(k)}$. 
\item $\phi$ is called {\it face preserving} if, for each face $F$ to $X$, ${\phi}(F)$ is a face of $Y$.
\end{enumerate}
\end{defn}

\begin{rem}
\begin{enumerate}
\item Similarly, a homotopy ${\phi}_t: Y \to X$,  is called skeletal (face preserving) if the map at each level is skeletal (face preserving). 
\item Notice that  face-preserving maps or homotopies are skeletal. 
\end{enumerate}
\end{rem}

\begin{prop}\label{rem-face-preserving}
If ${\phi}: Y \to X$ is a skeletal homotopy equivalence then it is necessarily face-preserving 
homotopy equivalence. 
\end{prop}

\begin{proof}
We will show that $\phi$ is face preserving. That will imply that each level of the skeletal homotopies will be also face preserving. That is because there is a skeletal homotopy
from each level to the corresponding identity map.

We use induction on the dimension of the faces. The inductive statement is:

\noindent
$\phi$ induces a bijection between the sets of $k$-faces of $Y$ and $X$.

The statement is obviously true for the $0$-faces, 
which are points. We assume that the statement is true for $Y^{(k-1)}$. That means that
$\phi$ induces a bijection between the $(k -1)$-faces of $Y$ and $X$.
Let $F'$ be a $k$-face of $Y$. Then ${\phi}(F') \subset X^{(k)}$, because $\phi$
is skeletal. First notice that 
${\phi}(F') \not\subset X^{(k-1)}$ because, in that case, at least two $(k -1)$-subfaces of $F'$ will
map to the same $(k -1)$-face of $X$. Also,  by the induction hypothesis, 
${\phi}({\partial}F')$ will be mapped to a boundary of a $k$-face in $X$, completing the proof.
\end{proof}

\begin{prop}\label{prop-natural}
 Let $(X^n, {\Lambda})$ and $(Y^n, {\Lambda}')$ be two characteristic pairs and ${\sigma}: T \to T$ a continuous automorphism. Let ${\phi}: Y \to X$ be
 a face-preserving map that satisfies ${\sigma}(T_{Y_i}) < T_{{\phi}(Y_i)}$ for each facet $Y_i$ of $Y$. 
 Then ${\phi}$ induces a $\sigma$-equivariant map 
 ${\phi}_*: M_{Y}({\L}') \to M_{X}({\L})$.
\end{prop}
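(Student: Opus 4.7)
The plan is to define the induced map in the obvious way on the product $T\times X$ and show that it descends to the quotient. Concretely, I would set
\[
  \widetilde{\phi}_* : T\times X \to T\times Y,\qquad (t,x)\mapsto (\sigma(t),\phi(x)),
\]
and let $\phi_*$ be the map induced on $M_X(\Lambda)=T\times X/\!\sim$ and $M_Y(\Lambda')=T\times Y/\!\sim$. Continuity of $\widetilde{\phi}_*$ is immediate from the continuity of $\sigma$ and $\phi$, and the $\sigma$-equivariance is a one-line check: $\phi_*(s\cdot[t,x]) = [\sigma(s)\sigma(t),\phi(x)] = \sigma(s)\cdot\phi_*[t,x]$. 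So everything reduces to well-definedness on equivalence classes.

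For well-definedness, I need to show that $(t,x)\sim(t',x)$ in $T\times X$ implies $(\sigma(t),\phi(x))\sim(\sigma(t'),\phi(x))$ in $T\times Y$; equivalently, that $\sigma(T(x))\subseteq T(\phi(x))$ for every $x\in X$. Fix $x$ and let $X_{i_1},\dots,X_{i_m}$ be the facets of $X$ containing $x$, so that $T(x)$ is the subtorus of $T$ generated by the circles $\Lambda(X_{i_j})(S^1)$. It suffices to show that $\sigma(\Lambda(X_{i_j})(S^1))\subseteq T(\phi(x))$ for each $j$.

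Here is where the skeletal assumption and the hypothesis on $\Lambda$ enter. Since $\phi$ is skeletal, $\phi(X_{i_j})$ lies in the codimension-one skeleton $Y^{(1)}$ of $Y$, so any facet $Y_k$ meeting the interior piece of $\phi(X_{i_j})$ containing $\phi(x)$ must actually contain $\phi(X_{i_j})$; in particular every such $Y_k$ contains $\phi(x)$, hence contributes to $T(\phi(x))$. Interpreting $\Lambda'(\phi(X_{i_j}))$ as the subtorus generated by $\Lambda'(Y_k)$ over all facets $Y_k\supseteq\phi(X_{i_j})$, the hypothesis $\sigma(\Lambda(X_{i_j}))<\Lambda'(\phi(X_{i_j}))$ says precisely that $\sigma(\Lambda(X_{i_j})(S^1))$ is contained in this subtorus, which in turn sits inside $T(\phi(x))$. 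Taking the subtorus generated over all $j$ gives $\sigma(T(x))\subseteq T(\phi(x))$, as required.

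The main conceptual (rather than technical) obstacle is clarifying the meaning of $\Lambda'(\phi(X_{i_j}))$ when $\phi(X_{i_j})$ is not itself a facet of $Y$; this is resolved by extending $\Lambda'$ to arbitrary faces via the subtorus generated by the $\Lambda'$-values of the facets containing the face, which is consistent with the definition of $T(y)$. Once this convention is fixed, the rest of the argument is a routine diagram chase using the skeletal property to match facet incidences between $X$ and $Y$.
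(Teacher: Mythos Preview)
Your proposal is correct and follows essentially the same route as the paper: define $\phi_*(t,x)=(\sigma(t),\phi(x))$ on $T\times X$, then verify well-definedness by showing $\sigma(T(x))\subseteq T(\phi(x))$ via the skeletal hypothesis and the containment $\sigma(\Lambda(X_i))<\Lambda'(\phi(X_i))$. Your explicit convention for $\Lambda'(\phi(X_i))$ when $\phi(X_i)$ is not a facet (the subtorus generated by the $\Lambda'$-values of all facets containing it) is exactly how the paper implicitly reads this expression, and your observation that every facet of $Y$ containing $\phi(X_{i_j})$ automatically contains $\phi(x)$ is the clean way to get $\Lambda'(\phi(X_{i_j}))\subseteq T(\phi(x))$.
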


\begin{proof}
Define that map ${\phi}_*$, the obvious way:
$${\phi}_*: M_{Y}({\L}') \to M_{X}({\L}), \; {\phi}_*([t,y]) = [{\sigma}(t), {\phi}(y)].$$
We need to show that the map is well-defined. Let $[t, y] = [t', y]$ in $M_Y({\L}')$. Then $t^{-1}t' \in 
T_{F'(y)}$. Also,  $F'(y) = Y_{i_1}{\cap} \dots {\cap}Y_{i_m}$, as intersection of facets. Then 
$${\phi}(y)\in  {\phi}(Y_{i_1}) {\cap} \dots {\cap} {\phi}(Y_{i_m}) = X'.$$
Since the map $\phi$ is face-preserving, there are facets $X_i$, $i = 1, \dots s$, of $X$ such that:
$$F({\phi}(y)) = {\phi}(Y_{i_1}) {\cap} \dots {\cap} {\phi}(Y_{i_m}) {\cap} X_1 {\cap} \dots X_s.$$
Therefore $T_{X'} < T_{F({\phi}(y)}$.
Also, we have that ${\sigma}(T_{Y_i}) < T_{{\phi}(Y_i)}$, and thus 
$$T_{F({\phi}(y))} > T_{X'} > {\sigma}(T_{F'(y)})$$
Therefore ${\sigma}(t^{-1}t')\in T_{F({\phi}(y))}$ and $[{\sigma}(t), {\phi}(y)] = [{\sigma}(t'), {\phi}(y)]$ in $M_X({\L})$.

By the construction, the map is obviously $\sigma$-equivariant.
\end{proof}

\begin{rem}\label{rem-isomorphism}
We use the above notation.
Let $(X, {\L})$ and $(Y, {\L}')$ are two characteristic pairs and ${\phi}: X \to Y$ a face-preserving map
such that ${\sigma}(T_{Y_i}) < T_{{\phi}(Y_i)}$ for each facet $Y_i$ of $Y$. Then necessarily
${\sigma}(T_{Y_i}) = T_{{\phi}(Y_i)}$. That is because both ${\sigma}(T_{Y_i})$ and  $T_{{\phi}(Y_i)}$
 are maximal subgroups of $T$ isomorphic to $S^1$. Thus they must be equal.
\end{rem}

\begin{cor}\label{cor-homotopy}
If ${\phi}_s: Y \to X$, $s\in [0, 1]$, is a face-preserving homotopy so that ${\sigma}(T_{Y_i}) < T_{{\phi}_s(Y_i)}$, for each $s$ and each facet $Y_i$ of $Y$. Then ${\phi}_{0,*} \simeq_{\sigma} {\phi}_{1,*}$.
\end{cor}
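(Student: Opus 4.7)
The plan is to construct the $\sigma$-equivariant homotopy directly by applying the formula of Proposition~\ref{prop-natural} parametrically in $s$. Define
\[
H: M_X({\L}) \times [0,1] \to M_Y({\L}'), \qquad H([t,x], s) = [{\sigma}(t), {\phi}_s(x)],
\]
where $[\, \cdot\,]$ denotes the equivalence class in the relevant identification space. The goal is to show $H$ is well-defined, continuous, and $\sigma$-equivariant on each slice, after which setting $H_s([t,x]) = H([t,x],s)$ gives $H_0 = {\phi}_{0,*}$ and $H_1 = {\phi}_{1,*}$ as required.

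First I would verify well-definedness. Fix $s \in [0,1]$. If $(t,x) \sim (t',x)$ in $M_X({\L})$, then $t^{-1}t' \in T(x)$, the subgroup generated by the ${\L}(X_{i_r})$ as $x$ lies in the relative interior of $X_{i_1} \cap \cdots \cap X_{i_m}$. Because ${\phi}_s$ is skeletal and satisfies the linear hypothesis ${\sigma}({\L}(F)) < {\L}'({\phi}_s(F))$ for every facet $F$, the argument in the proof of Proposition~\ref{prop-natural} applies verbatim to ${\phi}_s$: the image point ${\phi}_s(x)$ lies in the relative interior of an intersection $Y_{j_1} \cap \cdots \cap Y_{j_p}$ equal to ${\phi}_s(X_{i_1}) \cap \cdots \cap {\phi}_s(X_{i_m})$, and the subgroup $\langle {\sigma}({\L}(X_{i_r})) \rangle$ is contained in $T({\phi}_s(x)) = \langle {\L}'(Y_{j_q})\rangle$. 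Hence ${\sigma}(t^{-1}t') \in T({\phi}_s(x))$, so $H([t,x], s)$ is independent of the representative.

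Continuity follows from the universal property of the quotient topology: the composite $T \times X \times [0,1] \to T \times Y$, $(t,x,s) \mapsto ({\sigma}(t), {\phi}_s(x))$, is continuous because $\sigma$ is continuous and $(x,s) \mapsto {\phi}_s(x)$ is the continuous map $X \times [0,1] \to Y$ defining the homotopy; composing with the quotient projection to $M_Y({\L}')$ and descending through the quotient $T \times X \to M_X({\L})$ (which is an open map on each slice and jointly a quotient after crossing with $[0,1]$, since $[0,1]$ is locally compact Hausdorff) gives a continuous map $H$. Equivariance on each slice is immediate: $H([ut, x], s) = [{\sigma}(u){\sigma}(t), {\phi}_s(x)] = {\sigma}(u) \cdot H([t,x], s)$.

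The only genuine content is the well-definedness step, and since the linear and skeletal conditions are assumed to hold at every $s$, that step is literally Proposition~\ref{prop-natural} applied at each parameter. No further obstacle arises; the result is essentially a parametrized restatement of the proposition.
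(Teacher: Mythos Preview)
Your proposal is correct and is precisely the argument the paper has in mind: the corollary is stated without proof immediately after Proposition~\ref{prop-natural}, and your construction $H([t,x],s)=[\sigma(t),\phi_s(x)]$ simply applies that proposition parametrically in $s$, with well-definedness, continuity, and $\sigma$-equivariance checked exactly as you describe.
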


We now investigate the reverse construction. We will do that in a much more restricted setting. First we need a proposition. 

\begin{prop}\label{prop-reverse}
Let $(X, {\Lambda})$ and $(Y, {\Lambda}')$ be two characteristic pairs such that all the faces of
$X$ and $Y$ are contractible manifolds with boundary. Let $f: M_Y({\L}') \to M_X({\L})$ a $\sigma$-equivariant homotopy equivalence, with $\sigma$ as before. Then
\begin{enumerate}
\item The map $\phi$ induced on the faces is face-preserving homotopy equivalence.
\item ${\sigma}(T_{F'}) = T_{{\phi}(F')}$ for each facet $F'$ of $Y$.
\item There is a $\sigma$-equivariant homotopy such that $f \simeq_{\sigma} {\phi}_*$.
\end{enumerate}
\end{prop}

\begin{proof}
The equivariance implies that $\phi$ is skeletal. 
Then Proposition \ref{rem-face-preserving} shows that the map is face preserving
homotopy equivalence.

For (2), 
let $Y_i$ be a facet of $Y$ and $T_{Y_i}$ its
isotropy group. Then, equivariance again, implies
that the isotropy group of ${\phi}(Y_i)$ contains ${\sigma}(T_{X_i})$. 
Together 
with Remark \ref{rem-isomorphism} it shows that ${\sigma}(T_{Y_i}) = T_{{\phi}(Y_1)}$. 
Let $F'$ be a face of $Y$, $F' = Y_{1}{\cap}\dots \cap Y_{m}$ written as an intersection
of facets. Since $\phi$ induces a bijection on  faces,
${\phi}(F') = {\phi}(Y_1) \cap \dots \cap {\phi}(Y_m)$, as an intersection of facets.
Then 
$${\sigma}(T_{F'}) = {\sigma}(T_{Y_1}{\times} \dots {\times}T_{Y_m}) =
{\sigma}(T_{Y_1}){\times} \dots {\times}{\sigma}(T_{Y_m}) = 
T_{{\phi}(Y_1)} {\times} \dots {\times} T_{{\phi}(Y_m)} = T_{{\phi}(F')}.$$

For (3), notice that the map $f$ induces a map 
$$f_Y: Y \to M_X({\L}), \ \mbox{with}\ \ f_Y(y) = f([1, y]) = [t_y, {\phi}(y)],$$
for some $t_y \in T$. 

For each face $F'$ of $Y$, we write $T_F = {\sigma}(T_{F'})$, where $F = {\phi}(F')$.
The map $f$ induces 
a homotopy equivalence 
$$f^{T_{F'}}: M_Y({\L}')^{T_{F'}} \to M_X({\L})^{{\sigma}(T_{F'})} = M_X({\L})^{T_F}$$
Write
$$f_{F'}: F' \xrightarrow{{\iota}_{F'}} M_Y({\L}')^{T_F'} \xrightarrow{f^{T_{F'}}}
M_X({\L})^{T_F}$$ 
where ${\iota}_{F'}(y) = [1, y]$. Explicitly, for $y \in F'$,
$$f_{F'}(y) = f^{T_{F'}}\circ{\iota}_{F'}(y) = f^{T_{F'}}([1, y]) 
= [t_y, {\phi}(y)].$$
Notice that ${\phi}(y) \in F$ (Lemma \ref{lem-fix}).
Also, define 
$${\phi}_{F'}: F'  \xrightarrow{{\iota}_{F'}} M_Y({\L}')^{T_F'} \xrightarrow{{\phi}_*^{T_{F'}}}
M_X({\L})^{T_F}, \;\;{\phi}_{F'}(y) = [1, {\phi}(y)]$$

Now let $y_0 \in F'$ be a base point. 
Let $c'_s$ be a contracting homotopy, starting from the identity on $F'$ and ending to the constant map at 
$y_0$. Similarly, choose a contracting homotopy $c_s$ from the identity on $F$ to the constant map to ${\phi}(y_0)$.
Let $W_F = T/T_F$ be the Weyl group of $T_F$. Choose
 a path $\beta$ in $W_F$ with ${\beta}(0) = t_{y_0}T_F$ and 
${\beta}(1) = T_F$. Then
define a homotopy ${\chi}_{F'}: F' {\times}I \to M_X({\Lambda})^{T_F}$, as follows
$${\chi}_{F'}(y, s)= \left\{
\begin{array}{ll}
f([1, c'_{2s}(y)]), & \displaystyle{0 \le s \le \frac{1}{2}} \\[2ex]
[\bar{\beta}(2s - 1), c_{2 - 2s}({\phi}(y))], & \displaystyle{\frac{1}{2} \le s \le 1}
\end{array}
\right.$$
with $\bar{\beta}(2s - 1)$ a coset representative of $\beta(2s - 1)$.
Notice that
\begin{enumerate}
\item ${\chi}_{F'}$ is well defined:
\begin{enumerate}
\item Let $\bar{\beta}_i(2s - 1) \in T$, $i = 1, 2$, be two elements of the coset ${\beta}(2s - 1).$
Then,  there is $t\in T_F$ such that $\bar{\beta}_1(2s - 1) =  t\bar{\beta}_2(2s - 1)$.
But 
$$c_{2 - 2s}({\phi}(y)) \in F \Rightarrow F(c_{2 - 2s}({\phi}(y))) \le F \Rightarrow
T_{F(c_{2 - 2s}({\phi}(y)))} \ge T_F \Rightarrow t \in T_{F(c_{2 - 2s}({\phi}(y)))}.$$
Therefore $\bar{\beta}_1(2s-1)(\bar{\beta}_2(2s-1))^{-1}=t\in T_{F(c_{2 - 2s}({\phi}(y)))}$ and so 
$$[\bar{\beta}_1(2s - 1), c_{2 - 2s}({\phi}(y))] = [\bar{\beta}_2(2s - 1), c_{2 - 2s}({\phi}(y))]$$ by definition. Hence  the homotopy does not depend on choice of the representative 
of ${\beta}(2s - 1)$ in $W_F$.
 \item For $s = 1/2$, the two branches of the function read:
\begin{enumerate}
\item $f([1, c'_1(y)]) = f([1, y_0]) = [t_{y_0}, {\phi}(y_0)]$.
\item $[\bar{\beta}(0), c_1({\phi}(y))] = [t_{y_0}, {\phi}(y_0)]$.
\end{enumerate}
\end{enumerate}
\item ${\chi}_{F'}(y, 0) = f([1, y]) = [t_y, {\phi}(y)] = f_{F'}(y)$.
\item ${\chi}_{F'}(y, 1) = [\bar{\beta}(1), {\phi}(y)] = [1, {\phi}(y)] = {\phi}_{F'}(y)$
\end{enumerate}

For each face $F'$, we will construct a homotopy 
$h_{F'}: F'{\times}[0,1] \to M_X({\Lambda})^{T_F}$ from $f_{F'}$ to ${\phi}_{F'}$ such
that:
\begin{enumerate}
\item $h_{F'}: {f}_{F'} \simeq \phi_{F'}$.
\item For $G'$ a subface of codimension $1$ of $F'$ (denoted $G'<F'$), the restriction of $h_{F'}$ to $G'$ has the form:
$$h_{F'}|G'(y, s) = \left\{
\begin{array}{ll}
h_{G'}(y, 2s), & \displaystyle{0 \le s \le \frac{1}{2}} \\[2ex]
[1, {\phi}(y)], & \displaystyle{\frac{1}{2} \le s \le 1}
\end{array}
\right.$$
\end{enumerate}
We use the notation $h_{F'}|G' = h_G'*{\phi}_*$ for the concatenation above.

The construction is done inductively. For a 0-cell $v'$, $\text{Im}(f_{v'}) = \{[1, {\phi}(v')]\}$. So the homotopy on the 0-skeleton
is the stationary homotopy. Let $F'$ be an 1-cell. Then ${\chi}_{F'}$ induces a homotopy from
$f_{F'}$ to ${\phi}_{F'}$. 
 Then there is a homotopy $h_{F'}$ 
 $$h_{F'}: f_{F'} \simeq {\phi}_{F'}, \; \text{rel}({\partial}F').$$

Now assume that we defined the homotopy ${\partial}h_{F'}$ over the boundary
of a $k$-face $F'$, $k > 1$. The second property of the homotopies
$h_{G'}$, for $G' < F'$, allows the assembly of the homotopies $h_{G'}$ to construct 
a homotopy $h_{{\partial}F'}$ on ${\partial}F'$.  The homotopy $h_{{\partial}F'}$ has the property
that, for each $G' < F'$ of codimension $1$, $h_{{\partial}F'}|G' = h_{G'}$.
Notice that, for each $G' < F'$, we have $G = {\phi}(G') < F$, $T_G > T_F$ and
$M_X({\Lambda})^{T_G} <  M_X({\Lambda})^{T_F}$. 
Thus we have a homotopy, for each $G' < F'$ of codimension $1$,
$$G'{\times}I \xrightarrow{h_{G'}} M_X({\Lambda})^{T_G} 
\xrightarrow{j_G} M_X({\Lambda})^{T_F}$$
That means that the homotopy $h_{{\partial}F'}$ induces a homotopy (also denoted 
$h_{{\partial}F'}$)
$$h_{{\partial}F'}: {\partial}F' {\times} I \to M_X({\Lambda})^{T_F}.$$
Using the homotopy extension property  
we have a homotopy
$g_{F'}: F'{\times}[0, 1] \to M_X({\Lambda})^{T_F}$ such that
\begin{enumerate}
\item $g_{F'}(y, 0) = f_{F'}(y)$.
\item For each $G ' < F'$, a face of $F'$ of codimension $1$, $g_{F'}|{G'} = h_{G'}$.
\end{enumerate}
Set $g_{F',1} = g_{F'}(-, 1)$. Now $g_F: f_{F'} \simeq g_{F',1}$ and ${\chi}_{F'}: f_{F'} \simeq 
{\phi}_{F'}$. Thus there is a homotopy $g_{F',1} \simeq {\phi}_{F'}$. Also, for $y \in {\partial}F'$, $y$
belongs to a codimension $1$ subface of $F'$ (it does not matter which one) and
$$g_{F',1}(y) = g_{F'}(y, 1) = h_{{\partial}F'}(y, 1) = [1, {\phi}(y)] = {\phi}_{F'}(y).$$
Thus, there is a homotopy ${\psi}_{F'}: g_{F',1} \simeq {\phi}_{F'}$, rel${\partial}F'$. Define the homotopy $h_{F'} = g_{F'}*{\psi}_{F'}$, the concatenation of the two homotopies. Then
\begin{enumerate}
\item $h_{F'}(y, 0) = f_{F'}(y)$.
\item $h_{F'}(y, 1) = {\phi}_{F'}(y)$.
\item If $y \in {\partial}F$, then
\begin{enumerate}
\item For $0 \le s \le 1/2$, $h_{F'}(y, s) = h_{{\partial}F'}(y, 2s)$.
\item For $1/2 \le s \le 1$, $h_{F'}(y, s) = {\psi}_{F'}(y, 2s) = {\phi}_{F'}(y)=[1,\phi(y)]$.
\end{enumerate}
\end{enumerate}
Thus, $h_{F'}$ satisfies all the conditions required. Working inductively we get that there is a homotopy $h: Y{\times}I \to M_X({\Lambda})$ such that
\begin{enumerate}
\item $h(y, 0) = f_Y(y) = f([1, y]) = [t_y, {\phi}(y)]$.
\item $h(y, 1) = {\phi}_*([1, y]) = [1, {\phi}(y)]$
\item For each face $F'$ of $Y$, $\text{Im}(h|F') \subset M_X({\Lambda})^{T_F}$. 
\end{enumerate}
Define $H: M_Y({\Lambda}') {\times} I \to M_X({\Lambda})$, $H([t, y],s) = {\sigma}(t)h(y, s)$, which 
is the required homotopy $H: f \simeq {\phi}_*$.
\end{proof}

%%%%%%%%%%%%%%%%%%%%%%%%%%%%%%%%%%%%%%%%%%%%%%%%%%%%%%%
\section{Rigidity}
%%%%%%%%%%%%%%%%%%%%%%%%%%%%%%%%%%%%%%%%%%%%%%%%%%%%%%%

Set $T = T^n$. Let $M^{2n}$ be a locally standard closed manifold and $X = M/ T$ the 
corresponding nice manifold with corners. 
In this section, we assume that:
\begin{enumerate}
\item All the faces of $X^n$ (and $X^n$ itself) are contractible spaces. 
\item $\check{\text{H}}^1(X, \mathscr{S}_{(X, {\Lambda})}) = 0$.
\end{enumerate} 
That is the situation when $M$ is a quasitoric manifold (Remark \ref{rem-homeomorphism}). 
In this case (Lemma \ref{lem-canonical})
$M \cong_T M_X({\Lambda})$ for the characteristic map $\Lambda$ induced by the action
and the pair 
$(M, X)$ splits. Notice that 
the $T$-action of $T$ on $M_X({\Lambda})$ is effective and 
its isotropy groups are subtori of $T$.
Thus the same is true for $M$.
Let $N^{2n}$ be a closed $2n$-dimensional locally linear
$T$-manifold and $f: N^{2n} \to M^{2n}$
a $T$-equivariant homotopy equivalence with $G$-homotopy inverse $g$.

\begin{lem}\label{lem-dimension}
The action of $T$ on $N^{2n}$ is effective.
\end{lem}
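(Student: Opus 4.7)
The plan is to argue by contradiction through the kernel of ineffectiveness, using the degree of $f$ to force the ineffective kernel on $N$ to act trivially on $M$.

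First I would set $K = \{t \in T : t \cdot y = y \text{ for every } y \in N^{2n}\}$, the kernel of the homomorphism $T \to \mathrm{Homeo}(N^{2n})$ defining the action. The goal is to show $K = \{e\}$. For any $k \in K$ and $x \in N^{2n}$, equivariance of $f$ gives
\[
f(x) = f(k \cdot x) = k \cdot f(x),
\]
so $k$ fixes every point of $f(N^{2n})$, i.e., $f(N^{2n}) \subseteq (M^{2n})^K$.

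Next I would verify that $f$ is surjective. Since $M^{2n}$ is a quasitoric manifold, it is a closed orientable $2n$-manifold, hence $H_{2n}(M^{2n};\bbZ) \cong \bbZ$ and $M^{2n}$ is connected. Because $f$ is a homotopy equivalence, $N^{2n}$ is connected and $H_{2n}(N^{2n};\bbZ) \cong \bbZ$, which (for a $2n$-manifold) implies $N^{2n}$ is closed and orientable. Then $f$ has degree $\pm 1$; if $f$ were not surjective it would factor through $M^{2n} \setminus \{p\}$ for some $p$, forcing the induced map on $H_{2n}$ to be zero, a contradiction. Hence $f(N^{2n}) = M^{2n}$, and therefore $(M^{2n})^K = M^{2n}$, i.e., $K$ acts trivially on $M^{2n}$.

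Finally, I would invoke effectiveness of the $T$-action on $M^{2n}$: the action is locally standard, modeled near each point on the standard action of $T^n$ on $\bbC^n$, which is effective. Thus the global action of $T$ on $M^{2n}$ is effective, which forces $K = \{e\}$, and the action on $N^{2n}$ is effective.

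The only step requiring any real work is justifying surjectivity of $f$, and this is a standard degree argument once we extract closedness and orientability of $N^{2n}$ from the homotopy equivalence with $M^{2n}$; the remainder is a direct unwinding of equivariance.
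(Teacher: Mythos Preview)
Your argument is correct, but it follows a different route from the paper's. The paper argues via fixed-point sets: if some $t\in T$ acts trivially on $N^{2n}$, then with $G=\langle t\rangle$ one has $N^{2n}=N^G\simeq M^G$ (because an equivariant homotopy equivalence restricts to a homotopy equivalence on $G$-fixed sets), and $M^G$ is a proper closed submanifold of $M^{2n}$ by effectiveness of the action on $M$; comparing dimensions (equivalently, top $\bbZ/2$-homology) gives the contradiction. You instead push the ineffective kernel forward along $f$ and use surjectivity of a degree-$\pm 1$ map to conclude that $K$ fixes all of $M^{2n}$, then invoke effectiveness directly. Your approach avoids the (standard, but not entirely trivial) fact that equivariant homotopy equivalences restrict to homotopy equivalences on fixed sets; the paper's approach avoids having to justify surjectivity of $f$. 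One small remark: in the generality of the lemma, $M^{2n}$ is only assumed pseudotoric over a contractible $X$, not literally quasitoric, so if you want to avoid checking orientability of $M^{2n}$ you can run your surjectivity argument with $\bbZ/2$-coefficients (a homotopy equivalence missing a point would factor through $M^{2n}\setminus\{p\}$, killing $H_{2n}(\,\cdot\,;\bbZ/2)$), and the rest goes through unchanged.
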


\begin{proof}
We assume that that is not the case. So there is $t\in T$ that fixes $N^{2n}$ pointwise. Let $G ={\langle}t{\rangle}$. Then $N^G = N^{2n} \simeq M^G$ since $f$ is an equivariant homotopy equivalence. But $M^G$ is a closed proper submanifold of 
$M^{2n}$, because the action on $M^{2n}$ is effective. Thus ${\dim}(N^G) = {\dim}(M^G) < {\dim}(M^{2n}) = {\dim}(N^{2n})$, a contradiction.
\end{proof}

\begin{lem}\label{lem-isotropy}
The non-trivial isotropy subgroups of $N$ are subtori of $T$.
\end{lem}

\begin{proof}
Let $y\in N$ with isotropy group $T_y$

$$T_y = G_{i_1}{\times} \dots {\times} G_{i_k}$$
where $G_{i_j}$ is either a subtorus or a finite cyclic group. Set $T'$ be the maximal
subtorus contained in $T_y$. Assume that $T_y\gneqq T'$.

Since the isotropy groups of $M$ are subtori, $M^{T'} = M^{T_y}$ but $N^{T'} \supsetneq N^{T_y}$. 
But $M^{T'} \simeq N^{T'}$ and $M^{T_y} \simeq N^{T_y}$.
Since fixed point
sets are closed submanifolds without boundary, we have that
$$ \dim M^{T_y} = \dim N^{T_y } < \dim N^{T'} = \dim M^{T'}$$
Contradiction, because $ \dim M^{T_y} = \dim M^{T'}$.
\end{proof}

Using \cite{yo}, Example 2.1 and Lemmata \ref{lem-dimension} and \ref{lem-isotropy}, we see that $N$ is 
locally standard. For completeness, in our case, we will give an explicit proof.

\begin{prop}\label{prop-locally-standard}
The action of $T$ on $N^{2n}$ is locally standard. Thus, $N^{2n}/T = Y$ is a manifold with corners.
\end{prop}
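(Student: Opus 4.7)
The plan is to reduce the proposition to the criterion from the first Remark after the definition of a locally standard action: since $T$ acts effectively on $N^{2n}$ by Lemma \ref{lem-dimension}, the slice theorem (cf.\ \cite{yo}) will yield local standardness as soon as we show that the $T$-action has no non-trivial finite isotropy groups. Once local standardness is established, $Y = N^{2n}/T$ being a nice manifold with corners is immediate from the third item of the Remark following the definition of $n$-manifold with corners.

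Since $f$ is $T$-equivariant, for every closed subgroup $H \le T$ the restriction $f^H \colon N^H \to M^H$ is an ordinary homotopy equivalence. Because $M^{2n}$ is quasitoric and hence locally standard, every isotropy of $T$ on $M^{2n}$ is a subtorus; in particular, for a non-trivial finite subgroup $H \le T$ the set $M^H$ sits in the union of the characteristic submanifolds and their pairwise intersections, a closed subset of real codimension at least two at each point. This will be the structural fact against which the local structure of $N^{2n}$ is tested.

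Suppose for contradiction that some $x \in N^{2n}$ has finite non-trivial isotropy $K = T_x$. The locally linear hypothesis (inherited from the Main Theorem) gives a $T$-equivariant identification of a neighbourhood of the orbit $Tx$ with $T \times_K V$, where $V$ is a real $n$-dimensional $K$-representation. If $K$ acts trivially on $V$, then $K$ fixes this open neighbourhood pointwise; since the fixed set $N^K$ is closed and $N^{2n}$ is connected (being homotopy equivalent to the connected $M^{2n}$), this forces $N^K = N^{2n}$, contradicting effectiveness. If instead $K$ acts non-trivially on $V$, then locally $N^K$ has the form $(T/K) \times V^K$, a topological manifold of dimension $n + \dim V^K$; comparing this with $M^K \simeq N^K$ — whose components have codimension at least two and whose pointwise isotropies are honest subtori containing $K$ — is meant to yield the final contradiction.

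The main obstacle is precisely this last case: the bare dimension inequality $n + \dim V^K \le 2n - 2$ is still compatible with $K$ being finite, so the argument requires a finer invariant to force the $K$-action on $V$ to be trivial and thereby return to the already-excluded first case. The natural tools here are Smith theory applied at each prime dividing $|K|$ (which transfers cohomological bounds from $M^K$ to $N^K$) or a comparison of the equivariant Euler classes of the slice representations, both of which ultimately exploit that the isotropies on $M^{2n}$ are subtori.
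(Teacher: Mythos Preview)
Your proposal is incomplete, and you say so yourself: after reducing to ``effective $+$ no finite isotropy $\Rightarrow$ locally standard'' and dispatching the case where $K$ acts trivially on the slice, you stop at the substantive case where $K$ acts nontrivially on $V$, offering only Smith theory or equivariant Euler classes as possible tools without carrying either one through. That second case is where all the content lies, so as written the proposal does not prove the proposition.

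By contrast, the paper's argument is a single line and uses no slice analysis at all: if $F$ is a nontrivial finite isotropy group of some point of $N$, then $N^{F}\neq\emptyset$, while the equivariant homotopy equivalence gives $N^{F}\simeq M^{F}$, and the paper asserts $M^{F}=\emptyset$. You should notice, however, that this last assertion is not literally correct---a quasitoric $M$ has $M^{T}\neq\emptyset$ (over the vertices of the quotient polytope), hence $M^{F}\supset M^{T}\neq\emptyset$ for every subgroup $F\le T$. What \emph{is} true is that no point of $M$ has isotropy group equal to a nontrivial finite $F$; the paper is tacitly conflating the fixed set $M^{F}$ with the orbit-type stratum $M_{(F)}$. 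Bridging that gap rigorously requires exactly the kind of finer comparison you were reaching for, so your caution is well placed even though your argument remains unfinished.
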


\begin{proof}
Let $y \in N$ with isotropy group $T_y$. Then, Lemma \ref{lem-isotropy} implies that $T_y$ is either 
trivial or a subtorus of $T$.
Since the action is locally linear, there is a linear slice
$$s: T{\times}_{T_y} V \to TV$$
so that $V$ is a linear representation of $T_y$ and $TV$ is an open subset of $N$. Lemma \ref{lem-dimension}
implies that the action is effective. Thus the action on $TV$ is also effective (\cite{ggk}, Corollary B.42). So
the induced representation $T{\times}_{T_y} V$ of $T$ is faithful. We consider the last representation as
$${\phi}: T \to O(2n) \hookrightarrow U(n).$$
Let $d: T \to U(n)$ be the standard diagonal embedding.
Since $T$ is abelian, the representation $\phi$ is the direct sum of 1-dimensional representations. Thus
the representation can be realized $d{\circ}{\phi}$ where $\phi$ is a smooth self-monomorphism $\phi$ of $T$. 
Such a monomorphism is necessarily an isomorphism. For $y\in Y$, $TV$ is an open $T$-invariant neighborhood of $y$, $s^{-1}$ a homeomorphism to ${\bbC}^n$ and 
$$s^{-1}(ty) = d{\circ}{\phi}(t)s^{-1}(y).$$ 
Since $d$ is the standard action on ${\bbC}^n$, that implies that the action is locally standard.
\end{proof}

We denote by ${\Lambda}'$ the characteristic function defined by the $T$-action on $N^{2n}$. Also, by Proposition 
\ref{rem-face-preserving}, the map $f$ induces a a face-preserving homotopy equivalence ${\phi}: Y \to X$.
By Proposition \ref{prop-reverse}, the map $f$ is $T$-homotopic to ${\phi}_*$.

We need a version of the Poincar{\'e} Conjecture. For an $n$-dimensional manifold
with boundary $(M,\partial M)$ the relative structure set ${\mathcal{S}}(M,\partial M)$
is the set of equivalence classes of pairs $(N,f)$ with $N$ an $n$-dimensional manifold with boundary and $f:N\rightarrow M$ a homotopy equivalence such that $\partial f:
\partial N\rightarrow\partial M$ is a homeomorphism. The equivalence relation is generated by homeomorphisms.

For the following lemma, the structure set is defined as follows:
$$\mathcal{S}(M, {\partial}M) = \{f:(X^n, {\partial}X) \to (M, {\partial}M)\mid f \;\text{a homotopy
equivalence}, \; f|_{{\partial}X} \;\text{homeomorphism}\}/\sim$$
where the equivalence relation is given by homeomorphisms.

\begin{lem}\label{lem-poincare}
Let $(M, {\partial}M)$ be a compact contractible $n$-manifold with boundary. Then the relative structure set $\mathcal{S}(M, {\partial}M) = *$, $n \not= 3$. If $n = 3$ and $M \subset S^3$, then $(M, {\partial}M) \cong (D^3, S^3)$.
\end{lem}

\begin{proof}
For $n = 1, 2$, the result is obvious. For $n \ge 4$,
there is the surgery exact sequence:
$$\dots \to [(M{\times}I, {\partial}M{\times}I), (G/Top, *)] \to L_{n+1}({\bbZ}) \to \mathcal{S}(M, {\partial}M) \to [(M, {\partial}M), (G/Top, *)] \to L_n({\bbZ})$$
(for $n \ge 5$ this is the classical surgery exact sequence (\cite{wa}), for $n = 4$ the result follows from the results 
of Freedmann (\cite{fr}), see also Kirby-Taylor (\cite{kt}, {\S}7).
But $M/{\partial}M \cong S^n$ and $\mathcal{S}(S^n) = *$ for $n \ge 4$. We also have commutative diagrams
$$ \begin{CD}
[(M{\times}I, {\partial}M{\times}I), (G/Top, *)] @>>> L_{n+1}({\bbZ}) \\
@V{\cong}VV @| \\
 [(M/{\partial}M{\times}I, {\partial}M{\times}I), G/Top] @>>> L_{n+1}({\bbZ}) 
\end{CD}
\qquad
\begin{CD}
[(M, {\partial}M), (G/Top, *)] @>>> L_n({\bbZ}) \\
@V{\cong}VV @| \\
[M/{\partial}M, G] @>>> L_n({\bbZ})
\end{CD}
$$
So the vanishing of the structure set of the sphere implies that, in the exact sequence, the first map is
onto and the last map is into. Thus $\mathcal{S}(M, {\partial}M) = *$.

For $n = 3$, the results of Perelman (\cite{pe1}, \cite{pe2}, \cite{pe3}) imply that there are face disks
and spheres in dimension 3. Thus $(M, {\partial}M) \cong (D^3, S^2)$ (\cite{pr-sp}, Lemma 5.2 and \cite{ro}, Proof of Theorem 3.10)
\end{proof}

\begin{lem}\label{lem-homeomorphism}
The map ${\phi}: Y \to X$ is face-preserving homotopic to a face-preserving homeomorphism.
\end{lem}

\begin{proof}
We will use the method that was used in \cite{mo-pr}, \cite{pr-sp}, and \cite{ro} to show that $\phi$ is face-preserving homotopic to a face-preserving homeomorphism.  
We will construct a face-preserving homotopy by induction on faces. Notice that because $f$ is a $T$ equivariant homotopy, the map is face-preserving (Proposition \ref{rem-face-preserving}). That means that if $F_1$ is a face of $Y$ of codimension-$k$, then $\phi$ maps $F_1$ to $F_2$ 
where $F_2$ is a face of $X$ of codimension-$k$. Also, each closed face is homeomorphic to a contractible manifold with boundary.

We start the induction.
The zero faces correspond to the $T$-fixed point sets. Thus, we have the same number of zero faces. The restriction of $\phi$ to zero faces is a homeomorphism. Now, let a face $F_1$ be a face of $Y$ and ${\partial}F_1$ its boundary. We assume that there is face-preserving homeomorphism
$h_{{\partial}F_1}$ face-preserving homotopic to ${\phi}|_{{\partial}F_1}$. Using the homotopy extension property, there is a map ${\phi}': F_1 \to F_2$ that is homotopic to ${\phi}|_{F_1}$ and
it extends the map $h_{{\partial}F_1}$. Because all the maps and homotopies are face-presrving at the boundary, they are face-preserving in the closed face $F_1$. By Lemma
\ref{lem-poincare}, ${\phi}'$ is homotopic to a homeomorphism relative to the boundary. As before, all homotopies are face-preserving. Continuing this way, we get a face-preserving
homeomorphism $h: Y \to X$ that is face-preserving homotopic to $\phi$. 
\end{proof}

Lifting the maps and the homotopies to the canonical models, we have the following.

\begin{cor}\label{cor-homeomorphism}
With the above notation, the map ${\phi}_*: N_Y({\Lambda}') \to M_X({\Lambda})$ is $T$-homotopic to a $T$-homeomorphism.
\end{cor}

\begin{lem}\label{lem-cech}
Let $\phi$ and $h$ be the maps of Lemma \ref{lem-homeomorphism}. Then
$\mbox{\v H}^1(Y, \mathscr{S}_{(Y, {\Lambda}')}) = 0$.
\end{lem}

\begin{proof}
Remark \ref{rem-isomorphism} implies that the map $h$ induces an isomorphism of characteristic pairs $(Y, {\Lambda}')$ and
$(X, {\Lambda})$ in the sense of \cite{yo1}, Section 4. By the remarks before Lemma 5.9 in \cite{yo1}, the map $h$ induces an
isomorphism:
$$h^*: \mbox{\v H}^1(X, \mathscr{S}_{(X, {\Lambda})})  \to\check{\text{H}}^1(Y, \mathscr{S}_{(Y, {\Lambda}')}).$$
The assumption is that the left hand side vanishes. The lemma follows.
\end{proof}

Lemma \ref{lem-canonical} implies the following.

\begin{cor}\label{cor-canonical}
The pair $(N, Y)$ splits i.e., there is an $T$-homeomorphism $N \cong_{T} N_Y({\Lambda}')$.
\end{cor}

\begin{thm}\label{thm-main}[Rigidity for $T^n$-Locally Standard Manifolds]
Let $M$ be a closed $T^n$-locally linear manifold over a manifold with corners $X$ and characteristic 
map $\Lambda$. We assume that
\begin{enumerate}
\item  All the faces of $X$ (and $X$ itself) are contractible manifolds with corners. 
\item $\mbox{\v H}^1(X, \mathscr{S}_{(X, {\Lambda})}) = 0$.
\end{enumerate}
Let  $N$ a locally linear closed $T^n$-manifold and $f:N \to M$ a $T^n$-equivariant homotopy equivalence. Then $f$ is $T^n$-homotopic to a $T^n$-homeomorphism.
\end{thm}

\begin{proof}
Lemma \ref{lem-canonical} implies that the pair $(M, X)$ splits.
From Proposition \ref{prop-locally-standard}, the action of $T^n$ on $N$ is locally standard. Corollary \ref{cor-canonical} implies that $(N, Y)$ splits.

 Then the map $f$ induces a face-preserving map ${\phi}: Y \to X$. Let
$$\bar{f}: N_Y({\Lambda}') \xrightarrow{\cong} N \xrightarrow{f} M \xrightarrow{\cong} 
M_X({\Lambda}).$$
It is enough to show that $\bar{f}$ is $T^n$-homotopic to a $T^n$-homeomorphism. Notice that
$\bar{f}$ also induces the map $\phi$ on the quotients. 
By Proposition \ref{prop-reverse}, $\bar{f} \simeq_{T^n} {\phi}_*$, and, by Corollary \ref{cor-homeomorphism},
${\phi}_*$ is $T$-homotopic to a $T$-homeomorphism $h$ .
Thus 
$$\bar{f} \simeq_{T^n} {\phi}_* \simeq_{T^n} h$$
and the last map is a $T^n$-homeomorphism.
\end{proof}

\begin{rem}
In \cite{yo1}, Theorem 6.2, there is a complete classification of $T^n$-locally standard manifolds. That classification applies to the above result. The difference is that the homeomorphism given in \cite{yo1} it is not necessarily equivariantly homotopic to the original homotopy equivalence.
\end{rem}

The following is an immediate consequence of Theorem \ref{thm-main}. The first is consequence 
of Remark \ref{rem-homeomorphism} (1).

\begin{cor}
Let $M$ be a quasitoric manifold. Let  $N$ a locally linear $T^n$-manifold and $f:N \to M$ a $T^n$-homotopy equivalence. Then $f$ is $T^n$-homotopic to a $T^n$-homeomorphism.
\end{cor}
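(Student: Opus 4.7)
The plan is very short: this corollary is obtained by verifying that the hypotheses of the preceding Rigidity Theorem for Pseudotoric Manifolds are automatically satisfied when $M$ is quasitoric, and then quoting that theorem verbatim.

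First I would recall that by definition a quasitoric manifold $M$ carries a locally standard $T^n$-action whose orbit space $X = M/T^n$ is a simple convex polytope with its standard face structure. Hence $M$ is in particular a pseudotoric manifold in the sense of Definition \ref{pseudotoric}, and $X$ is a nice manifold with corners which is simple (codimension-$n$ faces lie in exactly $n$ facets, by the standard definition of a simple polytope). So the hypotheses on $(M, X)$ in the Rigidity Theorem for Pseudotoric Manifolds are met.

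Next I would verify the contractibility hypothesis. Every face of a convex polytope is itself a convex polytope, hence convex and so contractible; the same holds for $X$ itself. Therefore every face of $X$ (and $X$ itself) is homeomorphic to a contractible manifold with corners, as required.

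Finally, with $N$ a locally linear $T^n$-manifold and $f: N \to M$ a $T^n$-homotopy equivalence, the Rigidity Theorem for Pseudotoric Manifolds applies directly to $(M, X, N, f)$ and yields that $f$ is $T^n$-homotopic to a $T^n$-homeomorphism. There is no real obstacle here; the only thing to check is the combinatorial and topological translation between ``quasitoric'' and ``pseudotoric over a contractible-faced nice simple manifold with corners'', which is immediate from the definitions.
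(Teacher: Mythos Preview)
Your proposal is correct and matches the paper's approach: the corollary is stated as an immediate consequence of the Rigidity Theorem for Pseudotoric Manifolds, and the only content is the observation that a simple convex polytope is a nice simple manifold with corners all of whose faces (being themselves convex polytopes) are contractible.
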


The following is consequence 
of Remark \ref{rem-homeomorphism} (2).

\begin{cor}
Let $M$ be a smooth $T^n$-locally standard manifold over a manifold with corners 
with all the faces contractible. Let  $N$ a locally linear $T^n$-manifold and $f:N \to M$ a $T^n$-homotopy equivalence. Then $f$ is $T^n$-homotopic to a $T^n$-homeomorphism.
\end{cor}

Also, a slightly more general result holds.

\begin{cor}
Let $M$ be a $T^n$-locally standard  manifold over a manifold with corners $X$. We assume that
$M$ satisfies the conditions of Theorem \ref{thm-main}. Let ${\sigma}: T^n \to T^n$ be a continuous automorphism. Let  $N$ a locally linear $T^n$-manifold and $f:N \to M$ a $\sigma$-equivariant homotopy equivalence. Then $f$ is $\sigma$-homotopic to a $\sigma$-homeomorphism.
\end{cor}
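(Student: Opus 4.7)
The plan is to reduce the $\sigma$-equivariant setting to the $T$-equivariant case already handled by the Main Theorem (Rigidity for Pseudotoric Manifolds) by reparametrizing the action on $N$. Let $N'$ denote the underlying space $N$ equipped with the new $T$-action $t \cdot' x = \sigma^{-1}(t)\, x$, where on the right we use the original action of $T$ on $N$. Because $\sigma$ is a continuous automorphism of $T$, this is again a continuous action, and local linearity is preserved: any local slice chart $U \to W \subset {\bbC}^n$ intertwining the original action with the standard one via an automorphism $\phi$ of $T$ now intertwines the new action via $\phi \circ \sigma^{-1}$, which is still an automorphism of $T$. Thus $N'$ is a locally linear $T$-manifold.

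Next I verify that $f : N' \to M$ becomes strictly $T$-equivariant. For $t \in T$ and $x \in N$,
\[ f(t \cdot' x) = f(\sigma^{-1}(t)\, x) = \sigma(\sigma^{-1}(t))\, f(x) = t\, f(x), \]
using the $\sigma$-equivariance of $f$ with respect to the original action. Since the underlying map of topological spaces is unchanged, $f : N' \to M$ remains a homotopy equivalence, so the Main Theorem applies and produces a $T$-equivariant homeomorphism $h : N' \to M$ together with a $T$-equivariant homotopy from $f$ to $h$.

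Finally I translate the conclusion back to the original action on $N$. The $T$-equivariance $h(t \cdot' x) = t\, h(x)$ reads $h(\sigma^{-1}(t)\, x) = t\, h(x)$; substituting $s = \sigma^{-1}(t)$ gives $h(s x) = \sigma(s)\, h(x)$, which is precisely the $\sigma$-equivariance of $h$ viewed as a map $N \to M$. The same substitution converts the $T$-equivariant homotopy on $N'$ into a $\sigma$-equivariant homotopy on $N$ from $f$ to $h$, completing the proof. The only conceptual point that needs verification is that local linearity and the locally standard form of the torus action are invariant under reparametrization by $\sigma$; this is automatic because $\sigma$ is a topological group automorphism, so no genuine obstacle appears beyond this bookkeeping.
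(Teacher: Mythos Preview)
Your argument is correct. The reparametrization $t\cdot' x=\sigma^{-1}(t)x$ does convert the $\sigma$-equivariant problem into a genuinely $T$-equivariant one, and your translation back is straightforward. One small point worth stating explicitly: to invoke the Main Theorem you need $f:N'\to M$ to be a \emph{$T$-equivariant} homotopy equivalence, not just a homotopy equivalence of underlying spaces; but the same substitution you use for $f$ shows that a $\sigma^{-1}$-equivariant homotopy inverse $g:M\to N$ becomes $T$-equivariant as a map $M\to N'$, and the accompanying homotopies transform likewise, so this is indeed only bookkeeping.

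The paper gives no proof of this corollary; it simply remarks that ``a slightly more general result holds.'' The intended justification is visible in the architecture of Section~3: Proposition~\ref{prop-natural}, Corollary~\ref{cor-homotopy}, and Proposition~\ref{prop-reverse} are all formulated for a general automorphism $\sigma$, so the proof of the Main Theorem can be rerun verbatim with $\sigma$ in place of the identity (the preliminary Lemmas~\ref{lem-dimension} and~\ref{lem-poincare} and Proposition~\ref{prop-locally-standard} need only the trivial observation that $N^G\simeq M^{\sigma(G)}$ and that $\sigma(G)$ is nontrivial when $G$ is). Your route is genuinely different in packaging: rather than carrying $\sigma$ through every step, you absorb it once into the action on $N$ and then quote the theorem as a black box. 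This is cleaner and makes it transparent that the $\sigma$-version contains no new content; the paper's approach, on the other hand, makes the $\sigma$-naturality of each construction explicit, which may be useful elsewhere.
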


An important class of quasitoric manifolds are complex projective spaces and even dimensional spheres.
Except the case of an even dimensional sphere, the product of these spaces do not have vanishing structure sets. But they are rigid as locally standard torus manifolds.

\frenchspacing

\end{document}